\newtheorem{theorem}{Theorem}[section]
\newtheorem{lemma}[theorem]{Lemma}
\newtheorem{prop}[theorem]{Proposition}
\newtheorem{rem}[theorem]{Remark}
\newcommand{\mA}{\mathcal{A}}
\newcommand{\mR}{\mathcal{R}}
\newcommand{\mD}{\mathcal{D}}
\newcommand{\mQ}{\mathcal{Q}}
\newcommand{\mB}{\mathcal{B}}
\newcommand{\rN}{\mathbb{R}}
\newcommand{\Hs}{\mathbb{H}}
\newcommand{\nN}{\mathbb{N}}
\newcommand{\uS}{\mathbb{S}}
\newcommand{\eg}{\varepsilon}
\newcommand{\ag}{\alpha}
\newcommand{\sg}{\sigma}
\title{Spherical mean transform from the pde point of view}
\author{Linh V. Nguyen}
\address{Department of Mathematics, University of Idaho, Moscow, Idaho 83843}
\begin{document}

\maketitle

\begin{abstract} We study the spherical mean transform on $\rN^n$. The transform is characterized by the Euler-Poisson-Darboux equation. By looking at the spherical harmonic expansions, we obtain a system of $1+1$-dimension hyperbolic equations, which provide a good machinery to attack problems of spherical mean transform. 

As showcases, we discuss two known problems. The first one is a local uniqueness problem investigated by M.~Agranovsky and P.~Kuchment, [{\em Memoirs on Differential Equations and Mathematical Physics}, 52:1--16, 2011]. We present a simple proof which works even under a weaker condition. The second problem is to characterize the kernel of spherical mean transform on annular regions, which was studied by C. Epstein and B. Kleiner [{\em Comm. Pure Appl. Math.}, 46(3):441--451, 1993]. We present a short proof that simultaneously obtains the necessity and sufficiency for the characterization. As a consequence, we derive a reconstruction procedure for the transform with additional interior (or exterior) information. 

We also discuss how the approach works for the hyperbolic and spherical spaces.

\end{abstract}


\section{Introduction}\label{S:Intro}
Let $f$ be a function defined on $\rN^n$. The spherical mean transform $\mR(f)$ of $f$ is defined as $$\mR(f)(x,t) = \frac{1}{|S(x,t)|} \int\limits_{S(x,t)} f(x) d\sg(x). $$ Here, $S(x,t) \subset \rN^n$ is the sphere of radius $t$ centered at $x$, $d\sg(x)$ is the surface measure on $S(x,t)$, and $|S(x,t)|$ is the total measure of $S(x,t)$.  The spherical mean transform has been intensively studied due to its applications to PDEs, approximation theory, inverse scattering, and thermoacoustic tomography (e.g., \cite{CH2,Jo,Be1,Be2,FHR,LP1,LP2,AQ}).

The spherical mean transform can be characterized by the Darboux (or Euler-Poisson-Darboux) equation. Namely, if $f$ is smooth enough (for example $f \in C^\infty(\rN^n))$, then $G(x,t)=\mR(f)(x,t)$ satisfies (e.g., \cite{Jo}):
\begin{equation}\label{E:D} \left\{\begin{array} {l} G_{tt}(x,t) + \frac{n-1}{t} G_t(x,t) -\Delta_x G(x,t)=0,~  (x,t) \in \rN^n \times \rN_+,\\
G(x,0)=f(x),~ G_t(x,0)=0, \quad x \in \rN^n. \end{array}\right. \end{equation}
Conversely, if $G(x,t) \in C^\infty(\rN^n \times \overline{\rN}_+)$ satisfies the above equation then $G=\mR(f)$.
Let us recall the polar coordinate decomposition of the Laplacian in $\rN^n$: $$\Delta = \frac{\partial^2}{\partial r^2} + \frac{n-1}{r} \frac{\partial}{\partial r} + \frac{1}{r^2} \Delta_{\uS^{n-1}},$$ where $r= |x|$ and $\Delta_{\uS^{n-1}}$ is the Laplace-Beltrami operator on the unit sphere $\uS^{n-1}$.  Hence, for any twice differentiable function $g=g(r)$: $$\Delta\left[ g(r) r^m Y^m(\theta) \right] = r^m Y^m(\theta) \left [\frac{\partial^2}{\partial r^2} + \frac{n-1+2m}{r}\frac{\partial}{\partial r} \right] g(r),$$
where $Y^m$ is any spherical harmonics of degree $m$.

Let us expand $f$ and $G$ in terms of spherical harmonics: \begin{eqnarray}  \label{E:Decom} f(x) = \sum_{m=0}^\infty \sum_{l=1}^{l_m} f_{ml}(r) r^m Y^m_{l}(\theta),~ G(x, t) =\sum_{k=0}^\infty \sum_{l=1}^{l_m} g_{ml}(r,t) r^m Y^m_{l}(\theta).\end{eqnarray}
For each $m \geq 0$, we denote $\mB_{r,m} = \left[ \frac{\partial ^2}{\partial r^2} + \frac{n-1+2m}{r} \frac{\partial}{\partial r} \right]$. Then, (\ref{E:D}) is equivalent to \begin{equation}\label{E:1dD} \left\{\begin{array} {l}\mB_{t,0} g_{ml}(r,t)  - \mB_{r,m}g_{ml} (r,t) =0,~ (r,t) \in \rN \times \rN_+, \\
g_{ml} (r,0)=f_{ml}(r),~\partial_t g_{ml} (r,0)=0,~r \in \rN, \end{array}\right. \end{equation} for each $m \geq 0$ and $l=1,..,l_m$. This equation can be symmetrized by a differential operator. Indeed, let 
$$\mQ_m = \left(\frac{r}{n} \frac{\partial}{\partial r} +1 \right)\left(\frac{r}{n+2} \frac{\partial}{\partial r} +1 \right)...\left(\frac{r}{n+2(m-1)} \frac{\partial}{\partial r} +1 \right).$$
We then have (see Appendix):
\begin{equation} \label{E:Qk} \mQ_m \mB_{r,m} = \mB_{r,0} \mQ_m.\end{equation}
Applying $\mQ_m$ to (\ref{E:1dD}), we obtain the symmetric equation \begin{equation} \label{E:1dDs}\left[\mB_{t,0} -\mB_{r,0}\right]\ag (r,t) =0,\end{equation} where  $\ag = \mQ_m g_{ml}$, for any $m \in \nN$ and $l=1,..,l_m$.

In this article, we exploit equations (\ref{E:1dD}) and (\ref{E:1dDs}) to present some short and clear proofs for some (old and new) properties of spherical mean transform.

The article is organized as follows. In Section \ref{S:Local}, we revisit a local uniqueness result by Agranovsky and Kuchment \cite{AK-S}. We also point out that the same result still holds under a weaker condition.  We then discuss a result by Epstein and Kleiner \cite{Ep}  for spherical mean transform on the annular region in Section {\ref{S:Ann}}. We provide a reconstruction procedure for the transform when some additional interior (or exterior) information is provided. Finally, we discuss how the decompostion and symmetrization work on the hyperbolic and spherical spaces.

\section{A local uniqueness result}\label{S:Local}
Let us recall the following result by Agranovsky and Kuchment \cite[Theorem 10]{AK-S}: 
\begin{theorem}\label{T:Main} Let $f \in C^\infty(B_{R+\eg})$ such that $f(x)=0$ for $x \in B_R$. Assume that $\mR(f)(x,R)=0$ for all $x \in B_\eg$. Then, $f(x) = 0$ for $x \in B_{R+\eg}$. Here, $B_R$ is the sphere of radius $R$ centered at $0$. \end{theorem}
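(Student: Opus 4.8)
The plan is to pass to the spherical harmonic components, symmetrize to the equation (\ref{E:1dDs}), translate the three hypotheses into vanishing conditions for a single scalar solution of a $1+1$-dimensional radial wave equation, and then close the argument using finite propagation speed together with the $r\leftrightarrow t$ symmetry of $\mB_{t,0}-\mB_{r,0}$. First I would expand $f$ and $G=\mR(f)$ as in (\ref{E:Decom}). Since spherical harmonics form a basis, the hypothesis $f=0$ on $B_R$ is equivalent to $f_{ml}(r)=0$ for $0\le r<R$ for all $m,l$, and $\mR(f)(x,R)=0$ for $x\in B_\eg$ is equivalent to $g_{ml}(r,R)=0$ for $0\le r<\eg$. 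Fixing $m,l$ and setting $\ag=\mQ_m g_{ml}$, the function $\ag$ solves (\ref{E:1dDs}) with $\ag(r,0)=\mQ_m f_{ml}(r)=:\phi(r)$ and $\pdh_t\ag(r,0)=0$. Because $\mQ_m$ acts in $r$ only, the hypotheses transfer verbatim: $\phi(r)=0$ for $0\le r<R$, and $\ag(r,R)=0$ for $0\le r<\eg$, with $\phi$ smooth on $(0,R+\eg)$.

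Next I would regard (\ref{E:1dDs}) as the radial part of the $n$-dimensional wave equation on the quarter plane $\{r,t>0\}$ and extend $\ag$ evenly across $r=0$ and $t=0$; this is legitimate because $\ag$ is even in $r$ by smoothness at the origin and $\pdh_t\ag(r,0)=0$. The principal part is the $1+1$ wave operator, so finite propagation speed applies, and vanishing of the Cauchy data on $\{t=0,\ |r|<R\}$ gives $\ag\equiv0$ on the diamond $\{|r|+|t|<R\}$; in particular $\ag(0,t)=0$ for $0\le t<R$. Since $\mB_{t,0}-\mB_{r,0}$ is symmetric under $r\leftrightarrow t$, the solution $u(x,y)=\ag(|x|,|y|)$ on $\rN^n\times\rN^n$ satisfies $\Delta_x u=\Delta_y u$ and hence Asgeirsson's mean value theorem: the mean of $u$ over $\{|x-x_0|=\rho,\ y=y_0\}$ equals its mean over $\{x=x_0,\ |y-y_0|=\rho\}$. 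Taking $x_0=y_0=0$ yields $\ag(\rho,0)=\ag(0,\rho)$, which reduces the goal to proving $\ag(0,t)=0$ for $t<R+\eg$.

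The heart of the matter is to upgrade the segment condition $\ag(r,R)=0$ into this statement. Applying Asgeirsson's identity with $x_0=0$ and $|y_0|=R$ expresses $\ag(r,R)$ as a weighted average of $\ag(0,\rho)$ over $\rho\in[R-r,R+r]$ against a positive, Abel-type weight that is nonvanishing at the leading edge $\rho=R+r$. Since $\ag(0,\rho)=0$ already for $\rho<R$, the vanishing of all these averages for $0<r<\eg$ becomes a Volterra integral equation of the first kind for $\ag(0,\cdot)$ on $[R,R+\eg)$, whose only solution is $\ag(0,\cdot)\equiv0$. By the symmetry above, $\phi(r)=\ag(r,0)=0$ for $r<R+\eg$. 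Finally, each factor $\tfrac{r}{n+2j}\pdh_r+1$ of $\mQ_m$ has only the singular function $r^{-(n+2j)}$ in its kernel, so $\mQ_m$ is injective on functions smooth at the origin; thus $\mQ_m f_{ml}=\phi=0$ on $(0,R+\eg)$ forces $f_{ml}=0$ there for every $m,l$, i.e. $f=0$ on $B_{R+\eg}$.

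I expect the main obstacle to be the Volterra/Abel inversion, that is, verifying that the Asgeirsson averaging operator with Dirichlet-only data on $\{t=R\}$ is injective, equivalently that the Riemann function of (\ref{E:1dDs}) does not degenerate along the characteristic through the leading edge. For $n=1$ this is transparent, since (\ref{E:1dDs}) is the plain wave equation and d'Alembert's formula gives $\ag(r,R)=\tfrac12[\phi(r+R)+\phi(r-R)]=\tfrac12\phi(r+R)$, reading off $\phi$ on $(R,R+\eg)$ directly. For $n\ge2$ one must control the lower-order terms of (\ref{E:1dDs}) and the coordinate singularities at $r=0$ and $t=0$, which is precisely what the even extension and the positivity of the Asgeirsson weight are designed to handle.
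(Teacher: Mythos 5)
Your proposal is correct in outline, but it takes a genuinely different route from the paper. The paper's proof of Theorem \ref{T:Main} never symmetrizes: it keeps the unsymmetrized system (\ref{E:1dD}), swaps the roles of $r$ and $t$ by setting $u(r,y)=g_{ml}(r,|y|)$ so that $r$ becomes the time variable of a Darboux equation (\ref{E:Std}), and then closes with two elementary energy arguments --- a sideways energy estimate on the cylinder $B_R\times[0,\eg]$, using the lateral datum $g_{ml}(r,R)=0$ for $r\in[0,\eg]$ coming from $\mR(f)(\cdot,R)=0$ on $B_\eg$, followed by the domain-of-dependence Proposition \ref{P:DoD} started at $r=\eg$, which fills the cone $|y|+r\le R+\eg$; reading off the data at $t=0$ gives $f_{ml}=0$ on $[0,R+\eg)$. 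Notably, the paper only uses $g_{ml}(0,t)=0$ for $t\in[0,R]$, not full vanishing on $B_R$, which is exactly why it yields the stronger Theorem \ref{T:Mainc} for free; your argument shares this robustness, since you too only need $\ag(0,t)=0$ on $[0,R]$. Your machinery --- symmetrization by $\mQ_m$, Asgeirsson's identity $\ag(s,0)=\ag(0,s)$, then inversion of an off-center spherical mean --- is precisely what the paper deploys in Section \ref{S:Ann} for Theorem \ref{T:Ep} (compare your identity $\phi=\mQ_m f_{ml}$ with $\ag(0,s)=g_{ml}(0,s)$ to equation (\ref{E:fg})), so in effect you prove the local uniqueness theorem with the annulus machinery. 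What your route buys is an explicit Abel-type relation between the data on $\{t=R\}$ and $\ag(0,\cdot)$; what the paper's route buys is brevity, uniformity in $n$, and no kernel analysis at all.

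Two caveats in your write-up are fixable but real. First, your claim that the Asgeirsson weight is nonvanishing at the leading edge is true only for $n\le 3$: the mean of $h(s)=\ag(0,s)$ over $S(y_0,\rho)$ with $|y_0|=R$ is $c_n\,\rho^{2-n}\int_{R-\rho}^{R+\rho} h(s)\,s\,\bigl[((R+\rho)^2-s^2)(s^2-(R-\rho)^2)\bigr]^{(n-3)/2}ds$, whose kernel vanishes to order $(n-3)/2$ at $s=R+\rho$ when $n\ge4$; injectivity of the resulting Volterra operator still holds, but via the generalized-Abel (fractional integration) argument rather than a nondegenerate-diagonal Volterra argument, and this is the step you must actually write out. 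Second, since $f\in C^\infty(B_{R+\eg})$ only, $\mR(f)$ and hence $\ag$ exist only on $\{r+t<R+\eg\}$, so you need the diamond form of Asgeirsson's theorem (a solution on $\{|x|+|y-y_0|\le\rho\}$ suffices), which fortunately keeps you inside the domain because $|x|+|y|\le R+\rho<R+\eg$ for $\rho<\eg$; similarly, your ``even extension plus finite propagation speed'' step has singular lower-order coefficients $\tfrac{n-1}{r}$, $\tfrac{n-1}{t}$, and is cleanest justified exactly as in Proposition \ref{P:DoD}, or by noting that $v(x,t)=\ag(|x|,t)$ solves (\ref{E:D}) and hence $\ag(0,t)$ is the spherical mean of $\phi(|\cdot|)$ over $S(0,t)$, which vanishes trivially for $t<R$.
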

The above theorem was proved by utilizing the notion of ridge function and Titchmarsh theorem. We present here an alternative proof using equation (\ref{E:1dD}) and simple energy arguments. Let us start with a basic domain of dependence argument which will be repeatedly used in this article: 
\begin{prop}\label{P:DoD} Let $c>0$, $\eg \geq 0$, and $u$ satisfy the equation: \begin{eqnarray}\label{E:DG} \left\{ \begin{array}{l} \left(\frac{\partial ^2}{\partial r^2} + \frac{c}{r} \frac{\partial}{\partial r} -\Delta_y \right) u(r,y) =0,~y \in \rN^n,~ r \geq \eg, \\ u(\eg,y)=u_\eg(y),~u_r(\eg,y)=0,~ y \in \rN^n. \end{array}  \right.\end{eqnarray}
Assume that $u_\eg(y)=0$ for all $y \in B(y_0,r_0)$. Then, $u(r,y) =0$ if $r \geq \eg$ and $|y-y_0|+r \leq r_0+\eg$. \end{prop}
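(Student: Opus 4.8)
The plan is to run an energy estimate in the backward cone of dependence with apex $(r_0+\eg, y_0)$. For $s \geq \eg$ I would introduce the truncated cone
$$K_s = \{(r,y): \eg \leq r \leq s,~ |y-y_0| \leq r_0+\eg-r\}$$
and the cross-sectional energy
$$E(s) = \frac{1}{2}\int_{|y-y_0|\leq r_0+\eg-s}\left(u_r^2 + |\nabla_y u|^2\right)(s,y)\,dy.$$
The region where we want $u=0$ is exactly $K_{r_0+\eg}$, whose base at $r=\eg$ is the ball $B(y_0,r_0)$ on which the data vanishes. Thus it suffices to prove $E(s)=0$ for every $s\in[\eg, r_0+\eg]$.

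The energy identity comes from multiplying the equation by $u_r$. Using $u_r u_{rr}=\tfrac12\partial_r(u_r^2)$ and $-u_r\Delta_y u = -\nabla_y\cdot(u_r\nabla_y u)+\tfrac12\partial_r|\nabla_y u|^2$, the equation rewrites as the pointwise identity
$$\partial_r\left[\tfrac12\left(u_r^2+|\nabla_y u|^2\right)\right] - \nabla_y\cdot\left(u_r\nabla_y u\right) = -\frac{c}{r}\,u_r^2.$$
The decisive feature is the sign: since $c>0$ and $r>0$, the right-hand side is $\leq 0$, so the lower-order term dissipates rather than produces energy. This is the step I would watch most carefully, since the whole argument hinges on that sign being favorable.

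Next I would integrate this identity over $K_s$ and apply the divergence theorem in the $(r,y)$ variables. The boundary splits into three pieces. On the bottom $\{r=\eg\}$ the vanishing of $u_\eg$ on $B(y_0,r_0)$ forces $u_r=0$ and $\nabla_y u=0$, so this face contributes nothing. On the top $\{r=s\}$ the outward normal is $(1,0)$ and the flux is exactly $E(s)$. On the lateral face $\{|y-y_0|=r_0+\eg-r\}$ the outward unit normal is $\tfrac{1}{\sqrt2}(1,\omega)$ with $\omega=(y-y_0)/|y-y_0|$, so the flux density equals $\tfrac{1}{\sqrt2}\big[\tfrac12(u_r^2+|\nabla_y u|^2)-u_r\,(\nabla_y u\cdot\omega)\big]\geq \tfrac{1}{2\sqrt2}(|u_r|-|\nabla_y u|)^2\geq 0$ by Cauchy--Schwarz. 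Collecting the pieces gives
$$E(s) + (\text{nonnegative lateral flux}) = -\int_{K_s}\frac{c}{r}\,u_r^2\,dr\,dy \leq 0,$$
which forces $E(s)\leq 0$, hence $E(s)=0$. Then $u_r$ and $\nabla_y u$ vanish throughout the connected cone $K_{r_0+\eg}$, so $u$ is constant there; as $u=0$ on the base, we conclude $u\equiv 0$, as claimed.

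The one genuinely delicate point is the case $\eg=0$, where the coefficient $c/r$ is singular at the apex level $r=0$. I would handle it by first integrating over $K_s\cap\{r\geq\delta\}$ for $\delta>0$, which introduces an extra bottom face at $r=\delta$ contributing $-E(\delta)$, and then letting $\delta\to 0$, using the smoothness of $u$ and $u_r(0,\cdot)=0$ to send $E(\delta)\to 0$. Because the damping term has the correct sign, the singular integral only helps and can simply be discarded, so this limiting step is a routine technicality rather than a real obstacle.
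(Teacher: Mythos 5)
Your proof is correct and is essentially the paper's own argument: both run the standard domain-of-dependence energy estimate on the backward cone over $B(y_0,r_0)$, using the favorable sign of the damping term $\frac{c}{r}u_r^2$ and a Cauchy--Schwarz bound on the lateral (respectively, sphere) flux. The only differences are cosmetic --- you integrate the energy identity over the truncated cone via the divergence theorem, while the paper differentiates the cross-sectional energy $E(r)$ and shows $\frac{dE}{dr}\leq 0$ --- plus your explicit $\delta\to 0$ treatment of the $\eg=0$ case, which the paper leaves implicit.
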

\noindent The proof Proposition \ref{P:DoD} is well known (see, e.g., \cite{CH2}). It consists in only a simple energy argument. We will provide in Appendix for the sake of completeness.
\begin{proof}[Proof of Theorem \ref{T:Main}]
Let us consider the series expansion (\ref{E:Decom}) of $f$ and $G=\mR(f)$. We observe that the conditions $f(x)=0$ for $ x \in B_R$ and $G(x,R) = \mR(f)(x,R) =0$ for $x \in B_\eg$ imply \begin{equation}\label{E:Conds} g_{ml}(0,t)=0,~ t \in [0,R], \mbox{ and }  g_{ml}(r,R)=0,~s \in [0,\eg]. \end{equation}
The goal is to prove that $f_{ml}(t) = g_{ml}(0,t) = 0$ for $t \in [0,R+\eg]$. Our idea is to transform equation (\ref{E:1dD}): 
 \begin{eqnarray*} \left\{\begin{array} {l} \left[ \frac{\partial ^2}{\partial t^2} + \frac{n-1}{t} \frac{\partial}{\partial t} \right] g_m(r,t)  -\left[ \frac{\partial ^2}{\partial r^2} + \frac{n-1+2m}{r} \frac{\partial}{\partial r} \right] g_m(r,t) =0,~ (r,t) \in \rN^2, \\
g_{ml} (r,0)=f_{ml}(r),\quad \partial_t g_{ml} (r,0)=0,~\forall r \in \rN, \end{array}\right. \end{eqnarray*}
 to a Darboux equation for which $r$ is the temporal variable.  Indeed, let us introduce the function $u(r,y) =g_{ml}(r, |y|)$, which is radially symmetric with respect to the variable $y \in \rN^n$. Then for any $y$ such that $|y|=t$,  $$ \Delta_y u(r,y) = \left[ \frac{\partial ^2}{\partial t^2} + \frac{n-1}{t} \frac{\partial}{\partial t} \right] g_m(r,t). $$
We, hence, obtain the Darboux equation: \begin{equation} \label{E:Std}\left[ \frac{\partial ^2}{\partial r^2} + \frac{n-1+2m}{r} \frac{\partial}{\partial r} \right] u(r,y) - \Delta_y u(r,y) =0.\end{equation}
From (\ref{E:Conds}), we obtain the following zero initial and boundary values of $u$:  $$u(0,y) =0,~\forall y \in B_R, \quad u(r,y)=0,~\forall (r,y) \in [0,\eg] \times S_R.$$
A simple energy argument shows that $u(r,y)=0$ for all $(r,y) \in B_R \times [0,\eg]$. Indeed, let $$E(r) = \int\limits_{B(0,R)} \left[|u_r(r,y)|^2 +|\nabla u(r,y)|^2 \right] dy.$$ Since $u(r,y)=0$ for $y \in S_R$, integration by parts gives $$\frac{d E(r)}{dr} =2  \int\limits_{B(0,R)} \left[u_r(r,y) -\Delta u(r,y) \right] u_r(r,y) dy.$$ Due to (\ref{E:Std}), $$\frac{d E(r)}{dr} =- \frac{2(n-1)}{r} \int\limits_{B(0,R)} |u_r(r,y)|^2 dy \leq 0.$$
We obtain $E(r) \leq E(0)=0$ for all $0 \leq r \leq \eg$. This implies $E(r)=0$ or $u(r,y)=0$ for all $(r,y) \in [0,\eg] \times B_R$.

We now arrive to the equation \begin{eqnarray*}\left\{\begin{array}{l} \left(\frac{\partial ^2}{\partial r^2} + \frac{n-1+2m}{r} \frac{\partial}{\partial r} -\Delta_y \right) u(r,y) =0, \\ u(\eg,y) = u_r(\eg,y)=0,~ y \in \rN^n.\end{array} \right. \end{eqnarray*}
Due to Proposition \ref{P:DoD}, we obtain $u(y,t)=0$ for $|y|+r \leq R + \eg$. This means $g_{ml}(r,t)=0$ for all $(r,t)$ such that $r+t \leq R+ \eg$.  In particular, $f_{ml}(s) = g_{ml}(0,s)=0$ for $s \in [0,R+\eg]$.

We conclude, due to the expansion (\ref{E:Decom}),  $f(x) =0$ for all $x \in B_{R+\eg}$. \end{proof}

The condition $f(x)=0$ for $x \in B_R$ is equivalent to $\mR(f)(x,t)=0$ for all $(x,t)$ such that $|x|+t \leq R$. It is much stronger than: $D^\ag_x\mR(f)(0,t)=0$ for any multiindex $\ag$ and $t \in [0,R]$. Following the above proof, we obtain the same conclusion under this weaker condition:
\begin{theorem}\label{T:Mainc} Let $f \in C^\infty(B_{R+\eg})$ such that $D_x^\alpha \mR(f)(0,t)=0$ for all multi-indices $\ag$ and $t \in [0,R]$. Assume that $R(f)(x,R)=0$ for all $x \in B_\eg$. Then $f (x)= 0$ for all $x \in B_{R+\eg}$. \end{theorem}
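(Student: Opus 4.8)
The plan is to mimic the proof of Theorem \ref{T:Main} almost verbatim, with the only change being how the vanishing conditions on $g_{ml}$ are extracted from the hypotheses. The key observation is that in the original proof, the condition $f(x)=0$ for $x\in B_R$ was used only to produce the boundary vanishing $g_{ml}(0,t)=0$ for $t\in[0,R]$; everything downstream (the energy identity, the application of Proposition \ref{P:DoD}) is identical. So I would first show that the weaker hypothesis $D_x^\alpha\mR(f)(0,t)=0$ for all multi-indices $\alpha$ and all $t\in[0,R]$ still yields $g_{ml}(0,t)=0$ on $[0,R]$ for every $m$ and $l$.

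The core step is therefore the following translation. Writing $G=\mR(f)$ and using the spherical-harmonic expansion (\ref{E:Decom}), the coefficient functions $g_{ml}(r,t)$ near $r=0$ are recovered from the Taylor expansion of $x\mapsto G(x,t)$ at the origin: the homogeneous degree-$m$ part of this expansion, sorted by spherical harmonic $Y_l^m$, determines $g_{ml}(0,t)$. Concretely, $g_{ml}(0,t)r^mY_l^m(\theta)$ is the $(m,l)$-component of the Taylor polynomial of $G(\cdot,t)$, so $g_{ml}(0,t)$ is a fixed linear combination of the derivatives $D_x^\alpha G(0,t)$ with $|\alpha|=m$. Since the hypothesis gives $D_x^\alpha G(0,t)=0$ for \emph{all} $\alpha$ and $t\in[0,R]$, in particular for those of order $m$, we conclude $g_{ml}(0,t)=0$ for $t\in[0,R]$. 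This reproduces exactly the first half of (\ref{E:Conds}); the second half, $g_{ml}(r,R)=0$ for $r\in[0,\eg]$, comes from $\mR(f)(x,R)=0$ on $B_\eg$ precisely as before.

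With (\ref{E:Conds}) in hand, the remainder of the argument is unchanged: introduce $u(r,y)=g_{ml}(r,|y|)$ to obtain the Darboux equation (\ref{E:Std}), run the energy functional $E(r)=\int_{B(0,R)}\bigl[|u_r|^2+|\nabla u|^2\bigr]\,dy$ to deduce $u\equiv 0$ on $[0,\eg]\times B_R$, and then invoke Proposition \ref{P:DoD} on the resulting zero-Cauchy-data problem to get $u(r,y)=0$ whenever $|y|+r\le R+\eg$. In particular $f_{ml}(s)=g_{ml}(0,s)=0$ on $[0,R+\eg]$ for every $m,l$, and summing over the expansion (\ref{E:Decom}) gives $f\equiv 0$ on $B_{R+\eg}$.

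The only genuinely new point, and thus the step I expect to require the most care, is justifying that the pointwise derivative data $D_x^\alpha G(0,t)=0$ really do kill each coefficient $g_{ml}(0,t)$. One must check that the spherical-harmonic expansion of a $C^\infty$ function commutes appropriately with differentiation in $x$ at the origin, so that the vanishing of all $x$-derivatives at $0$ is equivalent to the vanishing of every Taylor coefficient and hence of every $g_{ml}(0,t)$; the smoothness hypothesis $f\in C^\infty(B_{R+\eg})$, together with the smoothing properties of $\mR$, should guarantee that $G(\cdot,t)$ is smooth enough for this expansion to be valid for each fixed $t\in[0,R]$. Everything else is a direct reprise of the preceding proof.
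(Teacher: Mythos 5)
Your proposal is correct and takes exactly the route the paper intends: the paper offers no separate proof of Theorem \ref{T:Mainc}, remarking only that it follows ``following the above proof,'' since the hypothesis $f=0$ on $B_R$ entered the proof of Theorem \ref{T:Main} solely through the vanishing $g_{ml}(0,t)=0$ on $[0,R]$. You correctly identify this and supply the one genuinely new step --- that $g_{ml}(0,t)$ is a fixed linear combination of the derivatives $D_x^\alpha \mR(f)(0,t)$ with $|\alpha|=m$, so the weaker hypothesis still forces $g_{ml}(0,t)=0$ --- after which the energy argument and Proposition \ref{P:DoD} apply verbatim.
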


\section{Spherical mean transform on the annular region} \label{S:Ann}
Given $0<a<A$, let us consider the annular region $$Ann(a,A)=\{x \in \rN^n: a<|x|<A\}.$$ We denote by  $Z^\infty(a,A)$ the space of all functions $f \in C^\infty(Ann(a,A))$ satisfying $\mR(f)(x,t) =0$ for all $(x,t) \in \mA$. Here, \begin{eqnarray}\label{E:Annu} \mA = \{(x,t): S(x,t) \subset Ann(a,A) \mbox{ and } S(0,a) \subset B(x,t) \},\end{eqnarray} where $B(x,t)$ is the ball of radius $t$ centered at $x$. The following result characterize all the functions $f\in Z^\infty(a,A)$:
\begin{theorem}\label{T:Ep} A function $f$ belongs to $ Z^\infty(a,A)$ if and only if $f_0=0$ and $f_{ml}$ is of the form  \begin{eqnarray}\label{E:fm} f_{ml}(r) = \sum_{i=0}^{k-1} c^i_{ml} r^{m-(n+2i)}, \end{eqnarray} for any $m>0$ and $\l=1,..,l_m$.
\end{theorem}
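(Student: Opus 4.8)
The plan is to run the spherical–harmonic reduction exactly as in the proof of Theorem \ref{T:Main}, and then to read off the characterization from a single evaluation of the symmetrized equation at the center $r=0$. First I would translate $f\in Z^\infty(a,A)$ into a condition on the modes. Writing $f$ and $G=\mR(f)$ via (\ref{E:Decom}), the requirement $\mR(f)(x,t)=0$ on the set $\mA$ of (\ref{E:Annu}) becomes $g_{ml}(r,t)=0$ for every $(m,l)$ on the triangle $T=\{(r,t):r\ge 0,\ t-r>a,\ t+r<A\}$, because a sphere $S(x,t)$ lies in $Ann(a,A)$ and encloses $S(0,a)$ precisely when $|x|=r$ and $t$ satisfy $t-r>a$ and $t+r<A$. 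I would record that the two slanted edges $t-r=a$, $t+r=A$ are characteristics of (\ref{E:1dD}); this is exactly what blocks finite–speed propagation (Proposition \ref{P:DoD}) from pinning down $f_{ml}$ and hence produces a kernel. The goal is then to show, mode by mode, that $g_{ml}=0$ on $T$ is equivalent to $\mQ_m f_{ml}=0$ on $(a,A)$, since $\ker\mQ_m$ is the $m$–dimensional space spanned by the powers of $r$ appearing in (\ref{E:fm}) (with $k=m$), while $\mQ_0=\mathrm{Id}$ forces $f_0=0$.

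For necessity I would apply the intertwining (\ref{E:Qk}): $\ag=\mQ_m g_{ml}$ solves the symmetric equation (\ref{E:1dDs}) with data $\ag(r,0)=\mQ_m f_{ml}(r)=:\bg(r)$, $\ag_t(r,0)=0$, and $\ag=0$ on the open set $T$. The decisive point is that (\ref{E:1dDs}) is the Darboux equation of the spherical mean transform itself, so $\ag$ is the mean in $\rN^n$ of the radial function $\bg(|\cdot|)$; in particular $\ag(0,t)=\bg(t)$, since the mean of a radial function over the centered sphere $S(0,t)$ is its value at radius $t$. Letting $r\to 0^+$ with $t\in(a,A)$ inside $T$ gives $\bg(t)=\ag(0,t)=0$, i.e. $\mQ_m f_{ml}=0$ on $(a,A)$, which is (\ref{E:fm}); for $m=0$ this reads $f_0=0$. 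The one technical issue is that $f_{ml}$ is only defined on the annulus and is singular at the origin, so the identification of $\ag$ with a genuine spherical mean is not automatic. I would resolve this by extending $f_{ml}$ to a smooth function on $[0,\infty)$: over any $S(x,t)$ with $(x,t)\in T$ the mean samples only values in $Ann(a,A)$, so the extended transform still vanishes on $T$ while now being a bona fide spherical mean, to which the centered–sphere identity applies verbatim.

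For sufficiency I would verify directly that each basis mode in (\ref{E:fm}) has spherical mean vanishing over every sphere surrounding the origin (hence on $T$), which makes the argument simultaneous with necessity. Writing the solid harmonic $r^m Y^m_l$ as a homogeneous harmonic constant–coefficient operator $P_m(\pdh)$, one has $r^{m-(n+2i)}Y^m_l=c\,P_m(\pdh)\Psi_i$, where $\Psi_i$ is the associated radial potential (a power of $|x|$, with a logarithmic factor in the exceptional even–dimensional cases). Since $\mR$ commutes with $\pdh_x$, and since an elementary computation (Pizzetti's formula together with Newton's theorem, starting from the mean $t^{2-n}$ of $|x|^{2-n}$) shows that the mean of $\Psi_i$ over an enclosing sphere is a polynomial in $|x|^2$ of degree $m-1-i<m$, the degree–$m$ operator $P_m(\pdh)$ annihilates it and the mean is zero.

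The main obstacle, and the place demanding the most care, is exactly this interface with the origin singularity: both directions hinge on controlling the mean over spheres that surround the singular point of $f_{ml}$, where the naive Darboux/uniqueness machinery breaks down, the mean being discontinuous across the light cone $t=r$ (this is also why the symmetrization gives necessity cleanly but sufficiency requires the explicit computation rather than a bare uniqueness statement). Once the centered–sphere identity and the polynomiality of the enclosing mean are in hand, the mode–by–mode equivalence assembles, through the convergence of the expansion (\ref{E:Decom}), into the stated description of $Z^\infty(a,A)$.
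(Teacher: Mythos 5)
Your proposal is correct, and its necessity half is essentially the paper's own argument: the paper also extends $f$ smoothly to $\rN^n$ (legitimate because every sphere $S(x,t)$ with $(x,t)\in\mA$ samples only the annulus), symmetrizes via $\mQ_m$ using (\ref{E:Qk}), and extracts the identity $[\mQ_m f_{ml}](s)=g_{ml}(0,s)$ of (\ref{E:fg}); where you derive this by identifying $\ag$ with the spherical mean of the radial function $\bg(|\cdot|)$ and evaluating at the center, the paper cites the symmetry relation $\ag(s,0)=\ag(0,s)$ for solutions of (\ref{E:1dDs}) from \cite{Jo,HelGeo} --- the same fact in two guises. Where you genuinely diverge is sufficiency. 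The paper gets it for free, because its argument is a chain of equivalences: Lemma \ref{L:ZAa} shows $f\in Z^\infty(a,A)$ is \emph{equivalent} to $g_{ml}(0,t)=0$ on $(a,A)$, the nontrivial direction being a domain-of-dependence energy argument (Proposition \ref{P:DoD}) for the Darboux equation with $r$ as the time variable, which propagates the vanishing of $g_{ml}(0,\cdot)$ on $(a,A)$ to vanishing of $g_{ml}$ on the whole triangle $T$; then (\ref{E:fm}) $\Leftrightarrow$ $\mQ_m f_{ml}=0$ on $(a,A)$ $\Leftrightarrow$ $g_{ml}(0,\cdot)=0$ there, via (\ref{E:fg}). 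You instead verify sufficiency by hand: writing $r^{m-(n+2i)}Y^m_l$ as $P_m(\pdh)$ of a radial potential (Hobson's formula) and showing via Newton's theorem and induction on $\Delta$ that the enclosing-sphere mean of that potential is a polynomial of degree $m-1-i<m$ in $|x|^2$, hence annihilated by $P_m(\pdh)$. This computation is sound (including the logarithmic potentials in even dimensions, whose enclosing means still have degree $<m$ in $|x|^2$), so your proof is complete; but your stated reason for abandoning the PDE route on the sufficiency side --- that the origin singularity of $f_{ml}$ and the discontinuity of the mean across $t=r$ break the uniqueness machinery --- is mistaken: the paper's Lemma \ref{L:ZAa} never touches the singular mode itself, only the smooth extension's coefficients $g_{ml}$, and the finite-speed argument run in the $r$-direction does pin down $g_{ml}$ on all of $T$ because $T$ lies in the union of backward cones over the segment $\{0\}\times(a,A)$. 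The trade-off: your route is self-contained and makes the kernel modes' vanishing means explicit; the paper's route proves both directions simultaneously in a few lines and, as a by-product of the same identity (\ref{E:fg}), yields the explicit inversion ingredient $k_{ml}$ of Theorem \ref{T:Interior}, which your potential-theoretic computation does not produce.
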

This result was proved in \cite{Ep}. The necessity was obtained by a projection formula on spaces of spherical harmonics. The sufficiency was proved by a series of lemmas concerning the connection between the functions in $Z^\infty(a,\infty)$ and harmonic functions with certain behavior at infinity.  In this section, we present an argument to simultaneously prove the necessity and sufficiency. It also provides a reconstruction procedure for the transform with additional interior/exterior information, which might find applications in biomedical imaging.

\begin{rem} By some convolution arguments as in \cite{Ep}, we can deduce from Theorem \ref{T:Ep} the same characterization for functions $f \in Z(a,A)$. Here, $Z(a,A)$ is the set of functions $f \in C(Ann(a,A))$ such that $\mR(f)(x,t)=0,~\forall (x,t) \in \mA$.  \end{rem}

Without loss of generality, we might also assume that $f \in C^\infty(\overline{Ann(a,A)})$. Otherwise, we prove the same characterization on the region $Ann(a+\eg,A-\eg)$ (for small enough $\eg>0$) and then let $\eg \to 0$.

Let us extend $f$ smoothly to $\rN^n$. Then, $G=\mR(f) \in C^\infty(\rN^n \times \overline{\rN}_+)$ satisfies the equation (\ref{E:D}). The condition $f \in Z^\infty(a,A)$ is equivalent to \begin{equation}\label{E:Gvn} G(x,t)=0,~(x,t) \in \rN^n \times (a,A), \mbox{ such that } |x| + \left|t -\frac{a+A}{2} \right| \leq \frac{A-a}{2}. \end{equation}
\begin{lemma}\label{L:ZAa} Consider the decomposition of $G$ as in (\ref{E:Decom}). Then $f \in Z^\infty(a,A)$ if and only if \begin{equation}\label{E:ag} g_{ml}(0,t)=0, ~\forall t \in (a,A), \end{equation} for all $m \in \nN$ and $l=1,..,l_m$. \end{lemma}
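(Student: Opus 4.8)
The plan is to unwind the definitions and show that the bulk condition (\ref{E:Gvn}) is equivalent to the seemingly weaker axial condition (\ref{E:ag}). One direction is immediate: if $f \in Z^\infty(a,A)$, then $G$ vanishes on the whole region described in (\ref{E:Gvn}). This region certainly contains the axial segment $\{(0,t): t \in (a,A)\}$, since for $x=0$ the constraint $|x|+|t-\tfrac{a+A}{2}| \leq \tfrac{A-a}{2}$ reduces to $t \in [a,A]$. Evaluating the spherical harmonic expansion (\ref{E:Decom}) of $G$ at $x=0$, I would use the fact that $r^m Y^m_l(\theta) \to 0$ as $r \to 0$ for every $m \geq 1$, while the $m=0$ term is $g_{00}(r,t)$; so $G(0,t)=0$ forces $g_{00}(0,t)=0$, and more generally applying the operators $D^\alpha_x$ and using that distinct spherical harmonic modes are linearly independent isolates each $g_{ml}(0,t)=0$ for $t \in (a,A)$. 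This gives (\ref{E:ag}).

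The harder direction is the converse: assume (\ref{E:ag}) and deduce that $G$ vanishes on the full region (\ref{E:Gvn}). Here I would work mode by mode, using the $1{+}1$-dimensional equation (\ref{E:1dD}) satisfied by each $g_{ml}$ together with the same Darboux reinterpretation that drove the proof of Theorem \ref{T:Main}. Fix $m$ and $l$ and set $u(r,y) = g_{ml}(r,|y|)$ for $y \in \rN^n$; as computed in Section \ref{S:Local}, $u$ solves the Darboux equation (\ref{E:Std}) in which $r$ plays the role of the time variable. The hypothesis $g_{ml}(0,t)=0$ for $t \in (a,A)$ translates into $u(0,y)=0$ on the spherical shell $\{y : a<|y|<A\}$, and the initial-condition half of (\ref{E:1dD}) (which says $\partial_t g_{ml}(r,0)=0$, i.e. $u$ is even and $C^\infty$ across $|y|=0$) gives $u_r(0,y)=0$ there as well. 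I can then invoke the domain-of-dependence Proposition \ref{P:DoD} (with $\eg=0$ and $c = n-1+2m$) on suitable balls $B(y_0,r_0)$ contained in that shell to propagate the vanishing of $u$ into a forward cone, obtaining $u(r,y)=0$ whenever $|y-y_0|+r \leq r_0$, hence $g_{ml}(r,t)=0$ on a corresponding triangular region in the $(r,t)$-plane.

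The main obstacle is bookkeeping the geometry: I must check that the union of the domains of dependence furnished by Proposition \ref{P:DoD}, taken over all balls $B(y_0,r_0) \subset \{a<|y|<A\}$ and re-expressed through $t=|y|$, exactly reconstitutes the set $\{(r,t): a<t<A,\ r+|t-\tfrac{a+A}{2}| \leq \tfrac{A-a}{2}\}$ appearing in (\ref{E:Gvn}). The optimal choice is to center each ball on the axis, e.g. $y_0$ with $|y_0| = \tfrac{a+A}{2}$ and $r_0 = \tfrac{A-a}{2}$, which yields the full depth $r \leq \tfrac{A-a}{2}$ at the midpoint and the expected linear taper toward the endpoints $t=a$ and $t=A$; I would verify that the radial coordinate $r$ and the inequality match the stated region after the substitution. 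Reassembling the vanishing of all modes $g_{ml}$ on this region and summing the expansion (\ref{E:Decom}) then gives $G \equiv 0$ on (\ref{E:Gvn}), which is precisely the statement $f \in Z^\infty(a,A)$, completing the equivalence.
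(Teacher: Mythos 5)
Your proposal is correct and follows essentially the same route as the paper: the forward direction is the immediate one (restrict to $x=0$ and separate modes), and for the converse you use exactly the paper's argument --- the Darboux reinterpretation $u(r,y)=g_{ml}(r,|y|)$ solving (\ref{E:Std}), then Proposition \ref{P:DoD} applied to the ball $B(y_0,r_0)$ with $|y_0|=\frac{a+A}{2}$ and $r_0=\frac{A-a}{2}$, which is precisely the paper's choice, so your ``geometry bookkeeping'' worry resolves in one step rather than requiring a union over balls. One small slip worth fixing: $u_r(0,y)=0$ does not follow from $\partial_t g_{ml}(r,0)=0$ (that condition governs smoothness of $u$ across $y=0$, since $t=|y|$ is not the Darboux time variable); it follows instead from $g_{ml}$ being smooth and even in $r$, or automatically from the singular coefficient $\frac{n-1+2m}{r}$ in (\ref{E:Std}) for smooth solutions --- a point the paper itself leaves implicit when invoking Proposition \ref{P:DoD} with $\eg=0$.
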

\begin{proof} It suffices to prove that (\ref{E:Gvn}) and (\ref{E:ag}) are equivalent. The fact that (\ref{E:Gvn}) implies (\ref{E:ag}) is obvious. We now prove the other implication. Similar to the proof of Theorem \ref{T:Main}, we translate equation (\ref{E:1dD}) to a Darboux equation by introducing the function $u(r,y) =g_{ml}(r, |y|)$. We then obtain: \begin{eqnarray*}  \left(\frac{\partial ^2}{\partial r^2} + \frac{n-1+2m}{r} \frac{\partial}{\partial r} -\Delta_y \right) u(r,y) =0. \end{eqnarray*}
 Let $y_0 \in \rN^n$ such that $|y_0|=\frac{a+A}{2}$. The condition $g_{ml}(0,t)=0$ for $t \in (a,A)$ implies $$u(0,y)=0,~\mbox{for all }y \in B(y_0,r_0),$$ where $r_0=\frac{A-a}{2}$. The domain of dependence argument (see Proposition \ref{P:DoD}) then implies $$u(r,y)=0,~\mbox{ for all $(y,r) \in \rN^n \times \rN_+$ such that } |y-y_0|+ r \leq r_0.$$
Recalling that $u(r,y) = g_{ml}(r,|y|)$, we obtain $$g_{ml}(r,t)=0, \mbox{ for all } (x,t) \mbox{ such that } \left|t - |y_0| \right| +r \leq r_0. $$
That is, $$g_{ml}(r,t)=0, \mbox{ for all } (r,t) \mbox{ such that } \left|t - \frac{a+A}{2}\right| +r \leq \frac{A-a}{2}. $$
Since this is true for all $m \in \nN$ and $l=1,..,l_m$, we conclude $$G(x,t)=0, \mbox{ for all } (x,t) \mbox{ such that } \left|t - \frac{a+A}{2}\right| +|x| \leq \frac{A-a}{2}. $$
This finishes our proof. \end{proof}
We now present our proof for Theorem \ref{T:Ep}. It simultaneously provides the necessity and sufficiency of the characterization in Theorem \ref{T:Ep}.
\begin{proof}[Proof of Theorem \ref{T:Ep}] Let $\ag(r,t) = \mQ_m g_{ml}(r,t)$, where \begin{equation}\label{E:Qm}\mQ_m = \left(\frac{r}{n} \frac{\partial}{\partial r} +1 \right)\left(\frac{r}{n+2} \frac{\partial}{\partial r} +1 \right)...\left(\frac{r}{n+2(m-1)} \frac{\partial}{\partial r} +1 \right).\end{equation}
Due to (\ref{E:Qk}), we arrive to the symmetric equation (\ref{E:1dDs}): 
 \begin{eqnarray*} \left[ \frac{\partial ^2}{\partial t^2} + \frac{n-1}{t} \frac{\partial}{\partial t} \right] \ag(r,t)  -\left[ \frac{\partial ^2}{\partial r^2} + \frac{n-1}{r} \frac{\partial}{\partial r} \right] \ag(r,t) =0,~ (r,t) \in \rN^2\end{eqnarray*}
We obtain (e.g., \cite{Jo,HelGeo}):
$$\ag(s,0) = \ag (0,s).$$ Simple observations show: $$\ag(0,s) = g_{ml}(0,s), ~\ag(0,s) = [\mQ_m f_{ml}](s).$$ 
Therefore, \begin{eqnarray} \label{E:fg} [\mQ_{m}f_{ml}](s) = g_{ml}(0,s). \end{eqnarray}
From Lemma \ref{L:ZAa}, $f \in Z^\infty(a,A)$ if and only if $g_{ml}(0,s)=0$ for $s \in (a,A)$. Or, equivalently,  \begin{eqnarray}\label{E:Qm0} [\mQ_m f_{ml}](s)=0,~\forall s \in (a,A).\end{eqnarray}
Due to formula (\ref{E:Qm}) of $\mQ_m$, (\ref{E:Qm0}) is equivalent to \begin{eqnarray*}  f_{ml}(r) = \left\{ \begin{array}{l} \sum\limits_{i=0}^{m-1}c^i_{ml} r^{-n-2i}, \mbox{ if } m \geq 1, \\ 0,\quad \mbox { if } m=0.\end{array} \right. \end{eqnarray*}
This finishes our proof. \end{proof}

We are now interested in reconstructing $f$ on $Ann(a,A)$ from $\mR(f)|_\mA$. The problem does not have a unique answer since the kernel described in Theorem \ref{T:Ep} is nontrivial.  We, therefore, make one more assumption: $f \in C^\infty(Ann(r_0, A))$ and it is known in the interior region $Ann(r_0,a]=\{x \in \rN^n: r_0 < |x| \leq a\}$, for some number $0 \leq r_0<a$. The problem of reconstructing $f$ from $\mR(f)|_{\mA}$ and $f|_{Ann(r_0,a]}$ resembles interior tomography with prior interior knowledge, which is widely investigated in biomedical imaging (e.g.,\cite{kudo2008tiny,courdurier2008solving}). The following result provide a formula useful for the reconstruction:

\begin{theorem} \label{T:Interior} Consider the spherical harmonic decomposition (\ref{E:Decom}) of $f$ and $G=\mR(f)$. Let $$k_{ml}(r) = \frac{n (n+2)...[n+2(m-1)]}{2^{(m-1)} (m-1)!}r^{-[n+2(m-1)]}\int\limits_a^r g_{ml}(0,\tau) \tau^{n-1}(r^2-\tau^2)^{m-1} d\tau.$$ 
Then for $r \in \rN_+$: \begin{eqnarray}\label{E:Diff} q_{ml}(r):= f_{ml}(r)-k_{ml}(r) = \left\{ \begin{array}{l} \sum\limits_{i=0}^{m-1}c^i_{ml} r^{-n-2i}, \mbox{ if } m \geq 1, \\ 0,\quad \mbox { if } m=0.\end{array} \right. \end{eqnarray}
\end{theorem}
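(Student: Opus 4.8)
The plan is to treat \eqref{E:Diff} as a purely one-dimensional (in $r$) statement about the operator $\mQ_m$, taking the identity \eqref{E:fg}, namely $\mQ_m f_{ml} = g_{ml}(0,\cdot)$, as the only input from the transform. First I would identify the kernel of $\mQ_m$. Writing $\theta = r\,\partial_r$ for the Euler operator, each factor in \eqref{E:Qm} is $\frac{r}{n+2j}\partial_r + 1 = \frac{1}{n+2j}(\theta + n + 2j)$, a polynomial in $\theta$; hence the factors commute and
\[ \mQ_m = \frac{1}{n(n+2)\cdots[n+2(m-1)]}\prod_{j=0}^{m-1}\bigl(\theta + n + 2j\bigr). \]
Since $\theta\, r^s = s\, r^s$, the factor $\theta + n + 2j$ annihilates $r^{-(n+2j)}$, and as the exponents $-(n+2j)$, $j=0,\dots,m-1$, are distinct we get $\ker \mQ_m = \operatorname{span}\{r^{-n}, r^{-n-2},\dots, r^{-n-2(m-1)}\}$ for $m\ge 1$ — exactly the right-hand side of \eqref{E:Diff}. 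Consequently it suffices to show that $k_{ml}$ is one particular solution of $\mQ_m u = g_{ml}(0,\cdot)$: then $\mQ_m q_{ml} = \mQ_m f_{ml} - \mQ_m k_{ml} = 0$, so $q_{ml}\in\ker\mQ_m$ and \eqref{E:Diff} follows. For $m=0$ the operator $\mQ_0$ is the identity, so $f_{0l}=g_{0l}(0,\cdot)$ and the claim is immediate with $k_{0l}=g_{0l}(0,\cdot)$; from here on $m\ge 1$.

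The computation $\mQ_m k_{ml} = g_{ml}(0,\cdot)$ is best carried out after the substitution $\rho = r^2$, $\sigma=\tau^2$. Then $\theta = 2\rho\,\partial_\rho$, so $\theta + n + 2j = 2(\rho\,\partial_\rho + \beta + j)$ with $\beta = n/2$; the kernel $\tau^{n-1}(r^2-\tau^2)^{m-1}\,d\tau$ becomes $\tfrac12\,\sigma^{\beta-1}(\rho-\sigma)^{m-1}\,d\sigma$, and $r^{-[n+2(m-1)]} = \rho^{-(\beta+m-1)}$. Using $n(n+2)\cdots[n+2(m-1)] = 2^m(\beta)_m$ with the Pochhammer symbol $(\beta)_m = \beta(\beta+1)\cdots(\beta+m-1)$, all powers of $2$ and the Pochhammer constants cancel, and the target identity reduces to
\[ \frac{1}{(m-1)!}\prod_{j=0}^{m-1}\Bigl(\rho\,\partial_\rho + \beta + j\Bigr)\left[\rho^{-(\beta+m-1)}\int_{a^2}^{\rho}(\rho-\sigma)^{m-1}\sigma^{\beta-1}H(\sigma)\,d\sigma\right] = H(\rho), \]
where $H(\rho)=g_{ml}(0,\sqrt\rho)$.

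Finally I would prove this last identity by recognizing the bracketed operator as the $m$-fold composition $J_\beta J_{\beta+1}\cdots J_{\beta+m-1}$ of the first-order right inverses $(J_sH)(\rho)=\rho^{-s}\int_{a^2}^\rho \sigma^{s-1}H(\sigma)\,d\sigma$. Each $J_s$ satisfies $(\rho\,\partial_\rho + s)J_s = \mathrm{Id}$, immediately from $\frac{d}{d\rho}\bigl(\rho^{s}J_sH\bigr) = \rho^{s-1}H$. That the composition $J_\beta\cdots J_{\beta+m-1}$ has exactly the Riemann--Liouville kernel $\frac{1}{(m-1)!}\rho^{-(\beta+m-1)}(\rho-\sigma)^{m-1}\sigma^{\beta-1}$ I would establish by induction on $m$: composing $J_\beta$ with the degree-$(m-1)$ kernel (parameter $\beta+1$) and switching the order of integration (Fubini) leaves an inner integral $\int_\sigma^\rho \tau^{-m}(\tau-\sigma)^{m-2}\,d\tau$, which under $u=1-\sigma/\tau$ collapses to the elementary $\frac{1}{\sigma}\int_0^{(\rho-\sigma)/\rho}u^{m-2}\,du$ and supplies precisely the new power $(\rho-\sigma)^{m-1}$ together with the factor $1/(m-1)$. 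Since the factors $(\rho\,\partial_\rho+\beta+j)$ all commute and each is right-inverted by the matching $J_{\beta+j}$, the product then telescopes to the identity. I expect the only genuine work to be this bookkeeping in the last step — tracking the $(m-1)!$ normalization and the Pochhammer constant so that they cancel exactly; the conceptual skeleton (particular solution plus kernel, via the factorization of $\mQ_m$) is otherwise routine once \eqref{E:fg} is in hand.
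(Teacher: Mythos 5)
Your proposal is correct and follows essentially the same route as the paper: both take the identity (\ref{E:fg}) as input, verify that $\mQ_m k_{ml} = g_{ml}(0,\cdot)$ by peeling off the factors of $\mQ_m$ one at a time (each application lowering the power $(r^2-\tau^2)^{m-1}$ in the kernel by one), and conclude that $q_{ml} = f_{ml} - k_{ml}$ lies in $\ker \mQ_m$, spanned by $r^{-n-2i}$, $i=0,\dots,m-1$. The only differences are cosmetic: the paper does the factor-peeling by direct differentiation under the integral in the variable $r$, whereas you pass to $\rho = r^2$ and realize the Riemann--Liouville kernel as a Fubini-induction composition of the right inverses $J_{\beta+j}$ — and you make explicit the kernel computation for $\mQ_m$ via the Euler operator, which the paper merely asserts from the formula (\ref{E:Qm}).
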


\begin{proof}
We claim that \begin{eqnarray} \label{E:km} \mQ_{m} k_{ml} = g_{ml}.\end{eqnarray} Indeed, direct calculations show:
\begin{eqnarray*}\left(\frac{r}{n+2(m-1)} \frac{\partial}{\partial r} +1 \right) k_{ml}(r) = \frac{n ...[n+2(m-2)]}{2^{m-2}(m-2)!}r^{-[n+2(m-2)]} \\ \times  \int\limits_a^r g_{ml}(0,\tau) \tau^{n-1}(r^2-\tau^2)^{m-2} d\tau. \end{eqnarray*}
We recall that $$\mQ_m =\left(\frac{r}{n} \frac{\partial}{\partial r} +1 \right)\left(\frac{r}{n+2} \frac{\partial}{\partial r} +1 \right)...\left(\frac{r}{n+2(m-1)} \frac{\partial}{\partial r} +1 \right).$$
By induction, we obtain
\begin{equation*} \mQ_m k_{lm}(r) =\left(\frac{r}{n} \frac{\partial}{\partial r} +1 \right) \left[ nr^{-n} \int\limits_a^r g_{ml}(0, \tau) \tau^{n-1}d\tau\right]=g_{ml}(0, r).\end{equation*}
This proves (\ref{E:km}). Since $f_{ml}$ also satisfies the same equation (\ref{E:fg}), $$\mQ_m (f_{ml}-g_{ml}) =0.$$
Therefore, $q_{ml}(r):= f_{ml}(r)-k_{ml}(r)$ has the form (\ref{E:Diff}).
\end{proof}
We now arrive to a procedure to compute $f$ on $Ann(a,A)$ from $\mR{f}|_\mA$ and $f|_{Ann(r_0,a]}$:
\begin{itemize}
\item[1)] Compute $f_{ml}(r)$ for $r \in (r_0,a]$ from $f|_{Ann(r_0,a]}$.
\item[2)] Compute $g_{ml}(0,s)$ for $s \in (a,A)$ from $G=\mR(f)|_{\mA}$.
\item[3)] Compute $q_{ml}(s)=f_{ml}(s) - k_{ml}(s)$ and its derivatives at $s=a$ from the above knowledge of $f_{ml}$ and $g_{ml}$. Use them to compute $c^i_{ml}$.
\item[4)] Compute $f_{ml}= k_{ml} + q_{ml}$ on $(a,A)$. Then, compute $f$ on $Ann(a,A)$. 
\end{itemize}

\begin{rem} The same procedure also works when $f(x)$ is provided in an exterior domain $Ann[A,R)=\{A \leq |x| < R\}$, for some $R>A$. \end{rem}
\section{The approach for Hyperbolic and Spherical spaces}\label{S:DR}
We have considered the spherical mean transform from a PDE point of view. We decomposed the Euler-Poisson-Darboux equation into the $1+1$-dimension hyperbolic equations. By exploiting them and their symmetrized versions, we obtain some old and new properties for spherical mean transform. In this section, we describe the approach for the hyperbolic and spherical spaces.

Let us consider the spherical mean transform on the hyperbolic space $\Hs^n$, which is the unit ball with the metric $$ds^2= \frac{4}{(1-|x|^2)^2} dx^2.$$
The Laplace-Beltrami operator $\Delta$ on $\Hs^n$ is $\Delta = \frac{1}{4}(1-|x|^2)^{n} \nabla \left((1-|x|^2)^{2-n}\nabla \right)$. In terms of polar coordinates,\begin{equation} \label{E:Polar}\Delta = \frac{\partial^2}{\partial r^2}+(n-1) \coth(r) \frac{\partial}{\partial r} + \frac{1}{\sinh^2(r)} \Delta_{\uS^{n-1}}.\end{equation} Here, $r=d_{\Hs^n}(x,0)$ and $\Delta_{\uS^{n-1}}$ is the Laplace-Beltrami operator on the Euclidean unit sphere $\uS^{n-1} \subset \rN^n$ applying to the variable $\theta = \frac{x}{|x|}$.

The spherical mean transform $\mR(f)$ of a function $f$ is characterized by (e.g., \cite{HelGeo}):
\begin{eqnarray}\label{E:Darbh} \left\{ \begin{array}{l} [\frac{\partial^2}{\partial t^2} +(n-1)  \coth(t) \frac{\partial}{\partial t}  -\Delta] G(x,t) =0, (x,t) \in \Hs^n \times \rN_+.\\  G(x,0)=f(x),~ G_t(x,0) = 0,~ x \in \Hs^n. \end{array}\right. \end{eqnarray} Conversely, if $G(x,t) \in C^\infty(\Hs^n \times \overline{\rN}_+)$ satisfies the above equation,  then $G(x,t) = \mR(f)(x,t)$. Let us consider the decompositions\footnote{This is different from the decomposition (\ref{E:Decom}) on the Euclidean space.}: 
\begin{eqnarray}  \label{E:Decomh} f(x) = \sum_{m=0}^\infty \sum_{l=1}^{l_m} f_{ml}(r) Y^m_{l}(\theta),~ G(x, t) =\sum_{k=0}^\infty \sum_{l=1}^{l_m} g_{ml}(r,t) Y^m_{l}(\theta).\end{eqnarray}
Let us consider the operator $$\mD_{m,r} = \frac{\partial^2 }{\partial r^2} +(n-1) \coth(r) \frac{\partial}{\partial r} - \frac{m(m+n-2)}{\sinh^2 r}.$$ 
Since $\Delta_{\uS^{n-1}} Y^m_l=-m(m+n-2) Y_l^m$, for any function $g=g(r)$: $$\Delta [g(r) Y_l^m(\theta)] = (\mD_k g)(r) Y_l^m(\theta).$$
We obtain the following analog of equation (\ref{E:1dD}) in Section \ref{S:Intro}:
\begin{eqnarray} \label{E:1dDh} \left\{ \begin{array}{l} (\mD_{0,t} - \mD_{m,r}) g_{ml} (r,t)=0, \\  g_{ml} (r,0)=f_{ml} (r),~ \frac{\partial}{\partial r} g_{ml} (r,0)=0. \end{array} \right.
\end{eqnarray}
This equation can be symmetrized by the operator $\mQ_m= \Gamma_1...\Gamma_m$, where $\Gamma_{k} = \frac{\partial }{\partial r} + (n+k-2) \coth(r)$. Indeed, one can verify (see \cite{Constant}): $$\mQ_m \mD_{m,r} =\mD_{0,r} \mQ_m.$$ 
Due to (\ref{E:1dDh}), we obtain the symmetric equation for $\ag(r,t) = \mQ_m g_{ml}(r,t)$: \begin{eqnarray} \label{E:1dDhs}  (\mD_{0,t} - \mD_{0,r}) \ag(r,t)=0. \end{eqnarray}
This equation provides a good tool to work with spherical mean transform on $\Hs^n$. For example, it was exploited in \cite{Constant} to characterize the functions $f$ such that $\partial^\ag_x \mR(f)(0,R)=0$ for a fixed $R>0$ and all multi-index $\ag$.

For the spherical space $\uS^n$, which is the unit sphere in $\rN^{n+1}$, the same argument as above works; except that one need to replace the hyperbolic trigonometric functions ($\cosh, \sinh, \coth, etc$) by the usual ones ($\cos,\sin,\cot,etc$). 

Finally, we remark that one can use the same approach to investigate the ball transform (e.g., \cite{Volch-b}), instead of the spherical mean transform.

\section*{Appendix}
In this Appendix, we present the proofs of equation \ref{E:Qk} and Proposition \ref{P:DoD}.
\begin{proof}[{\bf Proof of equation \ref{E:Qk}}]
Let us recall that $$\mQ_m = \prod_{i=0}^{m-1} \left(\frac{r}{n+2i} \frac{\partial}{\partial r} +1 \right)= \left(\frac{r}{n} \frac{\partial}{\partial r} +1 \right)\left(\frac{r}{n+2} \frac{\partial}{\partial r} +1 \right)...\left(\frac{r}{n+2(m-1)} \frac{\partial}{\partial r} +1 \right).$$ One could easily check:
\begin{eqnarray*} && \left(\frac{r}{n+2(k-1)} \frac{\partial}{\partial r}  +1 \right)\mB_{r,k} =\mB_{r,k-1} \left(\frac{r}{n+2(k-1)} \frac{\partial}{\partial r}  +1 \right).\end{eqnarray*}
By induction, we obtain: $$\mQ_m \mB_{r,m} = \mB_{r,0} \mQ_m.$$ \end{proof}

\begin{proof}[\bf {Proof of Proposition \ref{P:DoD}}]
For $\eg < r \leq r_0+\eg$, let $$E(r) = \int\limits_{B(y_0,r_0+\eg -r)} [|u_r(r,y)|^2 + |\nabla u(r,y)|^2] dy.$$
Then \begin{eqnarray*} \frac{d E(r)}{dr} &=& 2 \int\limits_{B(y_0,r_0+\eg -r)} [u_r(r,y)u_{rr}(r,y) + \nabla u(r,y) \nabla u_r(r,y)] dy \\ &-&  \int\limits_{S(y_0,r_0+\eg -r)} (|u_r(r,y)|^2 + |\nabla u(r,y)|^2) dy.\end{eqnarray*}
Taking integration by parts, we obtain \begin{eqnarray*} \frac{d E(r)}{dr} &=& 2 \int\limits_{B(y_0,r_0+\eg -r)} [u_r(r,y)u_{rr}(r,y) - \Delta u(r,y) u_r(r,y)] dy \\ &-&  \int\limits_{S(y_0,r_0+\eg -r)} \left[|u_r(r,y)|^2 -2 u_r(r,y) \partial_\nu u(r,y) + |\nabla u(r,y)|^2\right] dy.\end{eqnarray*}
Here, $\partial_\nu$ is the outer normal derivative on $S(y_0,r_0+\eg-r)$. Since $|\partial_\nu u| \leq \|\nabla u\|$, the last integral is nonnegative. Therefore, 
\begin{eqnarray*} \frac{d E(r)}{dr}  && \leq  2 \int\limits_{B(y_0,r_0+\eg-r)} \left[u_r(r,y)u_{rr}(y,r) - \Delta u(r,y) u_r(r,y) \right] dy \\ && \leq 2 \int\limits_{B(y_0,r_0+\eg -r)} \left[u_{rr}(r,y) + \frac{n+2m-1}{r} u_r(r,y) - \Delta u(r,y) \right] u_r(r,y) dy.\end{eqnarray*}
Due to equation (\ref{E:DG}), we obtain $\frac{d E(r)}{dt} \leq 0$ for all $\eg < r \leq r_0+\eg$. Since $E(\eg) =0$, we obtain $E(r)=0$ for all $\eg \leq r \leq r_0+\eg$. This implies $u(r,y)=0$ for all $y \in B(y_0, r_0+\eg -r)$. Equivalently, $u(r,y)=0$ for all $(r,y)$ such that $r \geq \eg$ and $|y-y_0|+ r \leq r_0+\eg$.  \end{proof}


\def\dbar{\leavevmode\hbox to 0pt{\hskip.2ex \accent"16\hss}d}

\end{document}